\numberwithin{equation}{section}
\newcommand{\margnote}[1]{
\ifthenelse{\boolean{shownotes}}%
{\marginpar{\raggedright\tiny\texttt{#1}}}%
{}%
}
\newcommand{\hole}[1]{
\ifthenelse{\boolean{shownotes}}%
{\begin{center} \fbox{ \rule {.25cm}{0cm}
\rule[-.1cm]{0cm}{.4cm} \parbox{.85\textwidth}{\begin{center}
\texttt{#1}\end{center}} \rule {.25cm}{0cm}}\end{center}}
{}
}
\theoremstyle{plain}
\newtheorem{lemma}{Lemma}[section]
\newtheorem{theorem}[lemma]{Theorem}
\theoremstyle{definition}
\newtheorem{remark}[lemma]{Remark}
\newtheorem{definition}[lemma]{Definition}
\theoremstyle{remark}
\newcommand{\R}{\mathbb{R}}
\newcommand{\cD}{{\mathcal{D}}}
\newcommand{\cE}{{\mathcal{E}}}
\newcommand{\cM}{{\mathcal{M}}}
\begin{document}

\title[On the role of diffusion in the tumor growth paradox]{On the role of cancer cells' diffusion in the tumor growth paradox}

\author[I. Padilla]{Isai Padilla}
 
\address{{\rm (I. Padilla)} Posgrado en Ciencias Matem\'{a}ticas\\Universidad Nacional Aut\'{o}noma 
de M\'{e}xico\\Circuito Exterior s/n, Ciudad de M\'{e}xico C.P. 04510 (Mexico)}

\email{ipbepsilon@gmail.com}

\author[R.G. Plaza]{Ram\'on G. Plaza$^*$}
\thanks{$^*$Author to whom correspondence should be addressed}
\address{{\rm (R. G. Plaza)} Instituto de 
Investigaciones en Matem\'aticas Aplicadas y en Sistemas\\Universidad Nacional Aut\'onoma de 
M\'exico\\Circuito Escolar s/n, Ciudad de M\'{e}xico C.P. 04510 (Mexico)}

\email{plaza@mym.iimas.unam.mx}

\begin{abstract}
In this contribution, the non-local, integro-partial differential system of equations proposed by Hillen \textit{et al.} \cite{HEH13} to account for the \emph{the tumor growth paradox} (or the observation that some incomplete cancer treatments may enhance tumor growth) is reviewed. It is shown that when cancer cells' diffusion effects and Neumann boundary conditions are taken into consideration, the same paradoxical tumor growth emerges. 
\end{abstract}

\keywords{Cancer stem cells; tumor growth paradox; geometric singular perturbation; fast-slow semiflows}

\subjclass[2010]{92C50, 35K61, 37L05}

\maketitle

\setcounter{tocdepth}{1}



\section{Introduction}
\label{secintro}

Cancer stem cells (CSCs) have been identified in many types of cancer such as leukaemia \cite{Cobetal00,LSVetal94}, and carcinomas of breast \cite{AlHetal03}, colon \cite{OPGD07}, brain \cite{SCTetal03} and pancreas \cite{Lietal07}, among others. The clinical observation that some incomplete cancer treatments may enhance tumor growth is known in the literature as \emph{the tumor growth paradox}. It has been suggested (see, e.g., Enderling \textit{et al.} \cite{EACBHH}) that the presence of CSCs may explain the tumor growth paradox thanks to their pluripotency and their resistance to treatment. Therefore, some CSCs-based mathematical models have been proposed in recent years to account for the tumor growth paradox. In this paper, we review the non-local, integro-partial differential system of equations proposed by Hillen, Enderling and Hahnfeldt \cite{HEH13}, a model which considers an heterogeneous population of CSCs and standard, non-stem, cancer cells (CCs). The model system is endowed with initial conditions for both cell populations and boundary conditions of Dirichlet or Neumann type. The authors in \cite{HEH13} show that, when diffusion effects are neglected, the resulting purely dynamical system underlies a sort of tumor growth paradoxical behaviour as a result of the immune response from cytotoxic treatments. Their analysis is based on a direct application of geometrical singular perturbation theory for ordinary differential equations (ODEs) \cite{Fen71, Fen79, Hek10,Jon95}.  In particular, they find that, among the equilibrium states of the reduced ODE system, the only global attractor is the pure CSC state, so that after a sufficiently long time the tumor will consist of CSCs only. In later contribution, Maddalena \cite{Madd14} showed that when diffusion is present and under Neumann boundary conditions, stationary solutions may change. Notably, Maddalena proved that the equilibrium states of the associated ODE are the only stationary solutions to the PDE system under Neumann conditions, but they change their stability due to diffusion effects. If diffusion is present and Dirichlet boundary conditions are considered, Delgado \textit{et al.} \cite{DeDS18} recently proved that there exist non-trivial stationary states that include coexistence states of CSCs and CCs with positive values for both populations. These works \cite{Madd14,DeDS18}, however, do not discuss the emergence of paradoxical growth behavior.

The motivation of the present review is to analyze the possible effects of cancer cells' diffusion on the tumor growth paradox. If diffusion effects are taken into consideration, does a paradoxical tumor growth emerge? We discuss the possible effects of diffusion by applying the invariant foliation theory of Bates, Lu and Zeng \cite{BaLZ98, BaLZ00}, which can be seen as the geometric singular perturbation counterpart theory for PDEs with a fast/slow structure (with some limitations). We consider the simplest case of Neumann boundary conditions: since the equilibria for the ODE coincide with stationary solutions to the PDE,  the ``slow manifold" on which the long time dynamics takes place is essentially the same as the one described by Hillen \textit{et al.} \cite{HEH13} for the ODE system. We show that the very same conditions for the emergence of the tumor growth paradox can be retrieved for the semiflow of the diffusive system under Neumann conditions. The description of the paradoxical tumor growth under the flow of the full PDE system compensates, we hope, for the lack of novelty of the results.

\section{The tumor growth paradox}
\label{secmodel}

\subsection{The non-local model of Hillen, Enderling and Hahnfeldt}

Hillen \textit{et al.} \cite{HEH13} proposed the following coupled integro-differential, no-local system of partial differential equations (PDEs) to describe the CSC and CC dynamics:
\begin{equation}
\label{HEHmodel}
\begin{aligned}
u_t &= D_u \Delta u + \delta \gamma \int_{\Omega} k(x,y,p(x,t)) u(y,t) \, dy, \\
v_t &= D_v \Delta v + (1-\delta) \gamma \int_{\Omega} k(x,y,p(x,t)) u(y,t) \, dy - \bar{\alpha} v + \\ &\;\;\;+ \rho \int_\Omega k(x,y,p(x,t)) v(y,t) \, dy,
\end{aligned}
\end{equation}
for $x \in \Omega \subseteq \R^n$ and $t \geq 0$. $\Omega$ is a open, bounded set with smooth boundary $\partial \Omega$. Here $u = u(x,t)$ denotes the density of CSCs and $v = v(x,t)$ denotes the density of non-stem cancer cells (CCs), at each point $x \in \Omega$ and at time $t > 0$. $p (x,t) = u(x,t) + v(x,t)$ is the total tumor cell density and $\delta \in [0,1]$, $\gamma > 0$ and $\rho > 0$ are constant parameters. The coefficient $\delta$ is the fraction of symmetric divisions of a CSC: if $\delta = 0$ it divides into one CSC and one normal CC, whereas if $\delta = 1$ the CSC divides into two CSCs; both divisions take place at mitosis rate $\gamma$ (number of cell cycles per unit time). $\rho > 0$ is the number of cycles per unit time for the normal CCs. The function $k = k(x,y,p)$ is an integral kernel that describes the rate of cell divisions contributing to the point in space $x$ from a cell at location $y$ per cell cycle time. $D_v > 0$ and $D_u > 0$ are the diffusion coefficients of CCs and CSCs, respectively. The parameter $\bar{\alpha} > 0$ denotes the death rate of normal CCs. Notice that in model \eqref{HEHmodel} the CSCs are considered to be inmortal. (See \cite{HEH13} for further information about this hypothesis.)

Model equations \eqref{HEHmodel} can be derived from a stochastic process known as a \textit{birth-jump process} as introduced in \cite{HGMdV15} in the context of forest fire spotting. Although the authors in \cite{HEH13} do not discuss this derivation, the reader is referred to \cite{PadTh19} (in the CSC and CC dynamics setting) for a complete derivation departing from a single-cell agent-based model in the spirit of Enderling \textit{et al.} \cite{EACBHH}.

System \eqref{HEHmodel} is also endowed with boundary conditions of Neumann or Dirichlet type. Homogeneous Neumann boundary conditions read
\begin{equation}
\label{Neumannbc}
\frac{\partial u}{\partial \hat{\nu}} = 0, \quad \frac{\partial v}{\partial \hat{\nu}} = 0, \qquad \text{on } \; \partial \Omega,
\end{equation}
where $\hat{\nu}$ is the normal exterior unit normal at $\partial \Omega$. They represent no biological flux of cancer cells due to impenetrable physical constraints such as tissue surrounded by bone or membranes. In this case the kernel $k$ must satisfy the condition
\[
k(x,y,p) = 0, \quad \text{for all } \; \; x \notin \Omega,
\]
inasmuch as there cannot be progeny contribution from cells located outside of the domain. On the other hand, homogeneous Dirichlet boundary conditions have the form
\begin{equation}
\label{Dirichletbc}
u = 0, \quad v = 0, \qquad \text{on } \; \partial \Omega,
\end{equation}
and they model tissues where cells can leave but not re-enter again, for example, in the case of vascularized carcinomas; thus, the transport of cells out of the domain but without allowing re-entering require the kernel $k$ to satisfy
\[
k(x,y,p) = 0, \quad \text{for all } \; \; y \notin \Omega.
\]

Finally, one should impose initial conditions of the form
\begin{equation}
\label{initialc}
u(x,0) = u_0(x), \quad v(x,0) = v_0(x), \qquad x \in \Omega.
\end{equation}

\subsubsection*{First reduction} Hillen \textit{et al.} \cite{HEH13} consider a first simplification of model \eqref{HEHmodel}, which consists of assuming that the progeny placement depends only on the total density at the destination, namely, that
\[
k = k(p(x,t)).
\]
The result is the following non-local integro-differential system,
\begin{equation}
\label{HEHm}
\begin{aligned}
u_t &= D_u \Delta u + \delta \gamma k(u+v) \int_{\Omega} u(y,t) \, dy, \\
v_t &= D_v \Delta v + (1-\delta) \gamma k(u+v)  \int_{\Omega} u(y,t) \, dy - \bar{\alpha} v + \rho k(u+v) \int_\Omega v(y,t) \, dy,
\end{aligned}
\end{equation}
subject to boundary conditions of Neumann \eqref{Neumannbc} or Dirichlet type \eqref{Dirichletbc} and to appropriate intitial conditions \eqref{initialc}. This is the system of equations we are concerned with.

For both analytical and numerical studies one must write the system in non-dimensional form. In this fashion, one is able to isolate the relevant parameters. Since the parameter $\bar{\alpha} > 0$ will be a key to differentiate CCs from CSCs, we shall scale time with the mitosis rate. For simplicity and following \cite{HEH13} we shall assume that both CCs and CSCs have the same proliferation rate (there is evidence that this is not the case \cite{Toletal14}, though), that is,
\[
\gamma = \rho.
\]
Notice that $\gamma$ has physical units of frequency. Thus, making the substitutions ${\alpha} = \bar{\alpha} / \gamma$, $d = D_u / D_v$ and
\[
x \to \Big(\frac{\gamma}{D_v}\Big)^{1/2} x, \quad t \to \gamma t, \quad u \to \frac{u}{U}, \quad v \to \frac{v}{U}, \quad k \to \Big( \frac{D_v}{\gamma} \Big)^{n/2} k,
\]
where  $U$ is a characteristic tumor cell density, we arrive at the non-dimensional system
\begin{equation}
\label{HEHnd}
\begin{aligned}
u_t &= d \Delta u + \delta  k(u+v) \int_{\Omega} u(y,t) \, dy, \\
v_t &=  \Delta v + (1-\delta)  k(u+v)  \int_{\Omega} u(y,t) \, dy - {\alpha} v +  k(u+v) \int_\Omega v(y,t) \, dy.
\end{aligned}
\end{equation}

Notice that $\alpha = \bar{\alpha}/\gamma$ is now the ratio between the CCs death rate and the tumor cell proliferation rate. If $0 < \alpha \leq 1$ tumor cells are born at a higher rate than the rate in which the cytotoxic therapy kills them. If $\alpha > 1$ the treatment kills more cells than those that are born per cycle time. Thus, $\alpha$ is a measure of the effectiveness of the treatment.

Finally, it is assumed that the function $k = k(p)$ satisfies
\begin{equation}
\label{hypk}
\left\{
\begin{aligned}
& k(p) \, \text{ is piecewise differentiable, } \\
& k(p) > 0,  \; \text{ for } \; p \in [0,1),\\
& k(p) = 0, \;\; \text{for } \; p \geq 1,\\
& k(p) \, \text{ is decreasing for } \; p \in [0,1).
\end{aligned}
\right.
\end{equation}
The typical form of $k$ considered in \cite{HEH13} reads,
\begin{equation}
\label{ksig}
k(p) = \max \{ 1 - p^\sigma, 0\}, \quad \sigma \geq 1.
\end{equation}

\subsection{Tumor growth paradox}

Hillen \textit{et al.} \cite{HEH13} define paradoxical tumor growth as follows.
\begin{definition} 
\label{defparadox}
Let $p_\alpha(t)$ be the tumor population with spontaneous death rate $\alpha$ for CCs at time $t \geq 0$. The population exhibits a \textit{tumor growth paradox} if there exist death rates $\alpha_1 < \alpha_2$ and positive times $t_1, t_2$ and $T_0$ such that
\[
p_{\alpha_1}(t_1) = p_{\alpha_2}(t_2) \quad \text{and } \quad p_{\alpha_1}(t_1 + T) < p_{\alpha_2}(t_2 + T), \quad \text{for each } \, T \in (0,T_0).
\]
\end{definition}

In other words, there is paradoxical tumor growth whenever, for tumors initially of the same size at some point of their evolution, the overall tumor size increases despite a higher CCs death rate.

\subsubsection*{Second reduction} The authors in \cite{HEH13} perform a further simplification by assuming that tumor growth is uniform across the domain, that is, that $u$ and $v$ do not depend on the spatial variable $x \in \Omega$. Thus, diffusion effects are neglected. By defining mean densities as $\bar{u}(t) = |\Omega| u(t)$, $\bar{v}(t) = |\Omega| v(t)$ and $\overline{p}(t) = |\Omega| p(t)$, model \eqref{HEHnd} reduces to the ODE system
\begin{equation}
\label{ODEs}
\begin{aligned}
\frac{d \bar{u}}{dt} &= \delta k(\overline{p}) \bar{u}, \\
\frac{d \bar{v}}{dt} &= (1-\delta) k(\overline{p}) \bar{u} - \alpha \bar{v} + k(\overline{p}) \bar{v}.
\end{aligned}
\end{equation}
Initial conditions \eqref{initialc} are substituted by constant initial conditions for the ODE of the form $(\bar{u}, \bar{v})(0) = (u_0,v_0)$.

Upon inspection of the associated ODE system \eqref{ODEs} one finds that its equilibrium points are
\begin{equation}
\label{states}
\begin{aligned}
P_0 = (0,0), & & \text{(cancer dissapears),}\\
P_1= (0, k^{-1}(\alpha)) = (0, v_*(\alpha)), & & \text{(pure CCs state),} \\
P_2 = (k^{-1}(0), 0) = (1,0), & & \text{(pure CSCs state),}\\
\end{aligned}
\end{equation}
where $v_*$ is the solution to $\alpha = k(v_*)$. Notice, however, that this may not be well defined in the case where $\alpha > 1$ because, for example, under hypotheses \eqref{hypk} the range of $k = k(p)$ is contained in $[0,1]$. Via linearization of the system of equations around equilibrium points, the authors find that
\begin{itemize}
\item $P_0$ is a saddle if $\alpha > k(0) = 1$ and an unstable node if $0 < \alpha < 1$.
\item $P_1$ is a saddle whenever $v_*(\alpha)$ exists, and,
\item $P_2$, the pure stem cell state, is a stable node or stable spiral.
\end{itemize}
Whence, the authors focus on the role of $P_2$ as the only globally asymptotic equilibrium point in the positively invariant triangular region
\[
\tilde{R} = \{ (\bar{u},\bar{v}) \, : \, \bar{u} \in [0,1], \, \bar{v} \geq 0, \, \bar{u} + \bar{v} \leq 1\}.
\]
That is, $P_2 = (1,0)$ is the only global attractor and after a sufficiently long time the tumor will consist of CSCs only. 

By considering a small parameter regime, namely, for $0 < \delta \ll 1$, sufficiently small, which is tantamount to consider very few symmetric divisions and mostly asymmetric ones (CSCs splitting into one CSC and one CC), the authors in \cite{HEH13} analyze the fast/slow structure associated to system \eqref{ODEs} by means of a geometric singular perturbation analysis \cite{Hek10,Jon95}. The result is the identification of a slow manifold, given by
\[
\widetilde{\cM}_0 = \{ (\bar{u},\bar{v}) \in \tilde{R} \, : \, k(\bar{p}) \bar{p} = \alpha \bar{v}, \,\; \bar{p} = \bar{u} + \bar{v} \},
\]
which is normally hyperbolic under the flow of the fast system (which is retrieved from setting $\delta = 0$ in \eqref{ODEs}). By Fenichel's theorems \cite{Hek10,Jon95}, there exists an invariant manifold $\widetilde{\cM}_\delta$ for the full system, close to $\widetilde{\cM}_0$, that can be written as a graph on $\widetilde{\cM}_0$. In this fashion, the long time dynamics is determined by the solution to the ``outer" system on the slow manifold. Once the solution has settled onto the slow manifold, the paradoxical tumor growth emerges. More precisely, the authors prove the following
\begin{theorem}[tumor growth paradox; Hillen \textit{et al.} \cite{HEH13}]
Let $(\bar{u}_1, \bar{v}_1)$ and $(\bar{u}_2, \bar{v}_2)$ be the corresponding solutions to system \eqref{ODEs} for values of $\alpha_1$ and $\alpha_2$, respectively, such that $\alpha_1 > \alpha_2 > 0$. Assume that the tumor dynamics has settled into the slow manifold $\widetilde{\cM}_0$ and that for a certain time $t_0 > 0$ the tumors have the same size, $\bar{p}_1(t_0) = \bar{p}_2(t_0) \in (0,1)$. Then $(d/dt) \bar{p}_1(t_0) > (d/dt) \bar{p}_2(t_0)$ and $\bar{p}_1(t) > \bar{p}_2(t)$ for all $t > t_0$. Moreover, if the tumors have the same initial conditions, then there exist times $t_a, t_b$ such that $\bar{p}_1(t_a)=\bar{p}_2(t_b)$ and $\bar{p}_1(t_a + \theta) > \bar{p}_2(t_b + \theta)$ for each $\theta > 0$.
\end{theorem}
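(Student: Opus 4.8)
The plan is to reduce the long-time dynamics to a single scalar autonomous ODE for the total density $\bar p$ and then compare its solutions for the two values $\alpha_1>\alpha_2$. On $\widetilde{\cM}_0$ the algebraic constraint $\alpha\bar v=k(\bar p)\bar p$ holds, so $\bar v=k(\bar p)\bar p/\alpha$ and $\bar u=\bar p-\bar v=\bar p\,(\alpha-k(\bar p))/\alpha$ are functions of $\bar p$ alone. Differentiating $\bar u=\bar p-k(\bar p)\bar p/\alpha$ along the (slow) flow and inserting the first equation of \eqref{ODEs}, $d\bar u/dt=\delta\,k(\bar p)\bar u$, then solving for $d\bar p/dt$, gives the reduced equation
\[
\frac{d\bar p}{dt}\;=\;\delta\,\Phi(\bar p,\alpha),\qquad \Phi(\bar p,\alpha)\;:=\;\frac{k(\bar p)\,\bar p\,\big(\alpha-k(\bar p)\big)}{\alpha-k(\bar p)-k'(\bar p)\,\bar p}\,.
\]
The denominator equals $-\partial_{\bar v}\big[k(\bar p)\bar p-\alpha\bar v\big]$, i.e. minus the eigenvalue of the fast flow transverse to $\widetilde{\cM}_0$, hence strictly positive precisely because $\widetilde{\cM}_0$ is (attracting) normally hyperbolic; moreover $0<k(\bar p)<\alpha$ on the part of $\widetilde{\cM}_0$ interior to $\tilde R$, since $\bar u>0$ there. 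Therefore $\Phi(\bar p,\alpha)>0$ on that set, so $\bar p$ is strictly increasing, each solution extends to $[t_0,\infty)$, and $\bar p(t)\nearrow 1$.

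Next I would read off the dependence on $\alpha$. With $a:=k(\bar p)$ and $b:=-k'(\bar p)\bar p>0$ (both independent of $\alpha$) one has $\Phi(\bar p,\alpha)=k(\bar p)\bar p\cdot\frac{\alpha-a}{(\alpha-a)+b}$, and since $\partial_\alpha\frac{\alpha-a}{\alpha-a+b}=\frac{b}{(\alpha-a+b)^2}>0$, the map $\alpha\mapsto\Phi(\bar p,\alpha)$ is strictly increasing for each admissible $\bar p$. Hence $\Phi(\bar p,\alpha_1)>\Phi(\bar p,\alpha_2)$ for \emph{every} $\bar p$ in the common admissible range; in particular at $\bar p=\bar p^\ast:=\bar p_1(t_0)=\bar p_2(t_0)$ this gives $\tfrac{d}{dt}\bar p_1(t_0)=\delta\,\Phi(\bar p^\ast,\alpha_1)>\delta\,\Phi(\bar p^\ast,\alpha_2)=\tfrac{d}{dt}\bar p_2(t_0)$, which is the first assertion.

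For the second assertion I would invoke the scalar comparison principle: since $\Phi(\cdot,\alpha_1)>\Phi(\cdot,\alpha_2)$ pointwise, $\bar p_1$ is a strict supersolution of $\dot w=\delta\,\Phi(w,\alpha_2)$ with $\bar p_1(t_0)=\bar p_2(t_0)$, whence $\bar p_1(t)\ge\bar p_2(t)$ on $[t_0,\infty)$ (uniqueness for this initial value problem being available because $k$, hence $\Phi(\cdot,\alpha_i)$, is continuous and piecewise-$C^1$ on $(0,1)$); strictness for $t>t_0$ follows from a first-crossing argument, as at a first time $t_1>t_0$ with $\bar p_1(t_1)=\bar p_2(t_1)=q$ one would have $\dot{\bar p}_1(t_1)\le\dot{\bar p}_2(t_1)$, contradicting $\Phi(q,\alpha_1)>\Phi(q,\alpha_2)$. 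Finally, for equal initial data, each $\bar p_i$ is a strictly increasing homeomorphism of its (post-settling) time interval onto an interval terminating at $1$, so the two images overlap; picking $q$ in the overlap and setting $t_a:=\bar p_1^{-1}(q)$, $t_b:=\bar p_2^{-1}(q)$ yields $\bar p_1(t_a)=\bar p_2(t_b)=q$, and applying the previous comparison to the time-shifted solutions $s\mapsto\bar p_1(t_a+s)$, $s\mapsto\bar p_2(t_b+s)$ — legitimate since the reduced equation is autonomous — gives $\bar p_1(t_a+\theta)>\bar p_2(t_b+\theta)$ for all $\theta>0$, matching Definition \ref{defparadox}.

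The computations above are routine; the one genuinely delicate point is conceptual, namely to make precise what ``the dynamics has settled onto $\widetilde{\cM}_0$'' means, since \emph{on} $\widetilde{\cM}_0$ the naive identity $d\bar p/dt=k(\bar p)\bar p-\alpha\bar v$ vanishes identically. One must either pass to the slow time $\tau=\delta t$ and use the geometric singular perturbation (Fenichel) reduction on $\widetilde{\cM}_0$, or else work on the true invariant manifold $\widetilde{\cM}_\delta$ and carry the $O(\delta)$ corrections; in either case the monotonicity in $\alpha$ established above is strict and $O(\delta)$-stable, so the conclusions are unaffected. A minor secondary nuisance is accommodating the kinks of the piecewise-differentiable $k$ in the comparison step, handled by gluing solutions across the finitely many non-smooth points.
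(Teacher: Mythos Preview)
The paper does not actually prove this theorem: it quotes the result from Hillen \textit{et al.} and explicitly defers to Theorems~3.3 and Corollary~3.4 in \cite{HEH13} for the details, giving only the preparatory fact that $\widetilde{\cM}_0$ is a graph $\bar v=\widetilde v_\alpha(\bar u)$ with the stated formula for $d\widetilde v_\alpha/d\bar u$. Your argument is therefore not being compared against a proof in the present paper but against the one in the cited source, and what you have written is precisely the standard reduction carried out there: use the slow-manifold constraint to express $\bar u,\bar v$ as functions of $\bar p$, derive the scalar reduced equation $\dot{\bar p}=\delta\,\Phi(\bar p,\alpha)$, check that $\Phi>0$ and is strictly increasing in $\alpha$, and conclude by a scalar comparison. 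Your derivation of $\Phi$, the positivity of the denominator (equivalently $-\partial_{\bar v}M>0$ by normal hyperbolicity), the observation $k(\bar p)<\alpha$ on the branch with $\bar u>0$, and the monotonicity computation $\partial_\alpha\frac{\alpha-a}{\alpha-a+b}=\frac{b}{(\alpha-a+b)^2}>0$ are all correct; the first-crossing argument and the autonomous time-shift for the ``same initial data'' part are the right way to finish.

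Your closing paragraph is well judged: the only genuinely nontrivial issue is the meaning of ``settled onto $\widetilde{\cM}_0$'', since summing the two ODEs in \eqref{ODEs} gives $\dot{\bar p}=k(\bar p)\bar p-\alpha\bar v$, which vanishes identically on $\widetilde{\cM}_0$ and would make the conclusion trivially false if taken literally. You correctly identify that the statement must be read as the Fenichel-reduced flow on $\widetilde{\cM}_0$ (equivalently, the leading-order dynamics on $\widetilde{\cM}_\delta$ pulled back to $\widetilde{\cM}_0$), which is exactly what both \cite{HEH13} and the present paper intend. One small implicit hypothesis you might make explicit: the equality $\bar p_1(t_0)=\bar p_2(t_0)\in(0,1)$ together with $\bar u_2(t_0)>0$ forces $\bar p^\ast>v_*(\alpha_2)$, which is what places $\bar p^\ast$ in the common admissible range where your comparison $\Phi(\cdot,\alpha_1)>\Phi(\cdot,\alpha_2)$ applies.
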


For details, see Theorems 3.3 and Corollary 3.4 in \cite{HEH13}. In this fashion, paradoxical tumor growth establishes itself as a robust and typical property of the ODE model \eqref{ODEs}. 

Other authors have studied model \eqref{HEHnd}. For instance, Borsi \textit{et al.} \cite{BFPH17} established the existence of spatially dependent solutions, $(u,v)(x,t)$ to the non-local model without diffusion ($D_u = D_v = 0$), and Fasano \textit{et al.} \cite{FMP16} proved the well-posedness of system \eqref{HEHm} under Dirichlet boundary conditions. However, we review (and focus on) the results by Maddalena \cite{Madd14} and Delgado \textit{et al.} \cite{DeDS18} pertaining to the existence of stationary states under the influence of diffusion.

\subsection{Stationary states and Neumann boundary conditions}

In a later contribution, Maddalena \cite{Madd14} considered the complete model with diffusion \eqref{HEHnd} under Neumann boundary conditions \eqref{Neumannbc} and initial conditions \eqref{initialc}. First, the author establishes the existence of solutions applying classical arguments (cf. \cite{Amann76}). 
\begin{theorem}[Maddalena \cite{Madd14}]
For any $(u_0, v_0) \in H^2(\Omega) \times H^2(\Omega)$ there exists $T > 0$ and a solution $(u,v) \in C([0,T);H^2(\Omega) \times H^2(\Omega)) \cap C^1((0,T); H^2(\Omega) \times H^2(\Omega))$ to the Cauchy prpblem for \eqref{fast} under Neumann boundary conditions \eqref{Neumannbc}.
\end{theorem}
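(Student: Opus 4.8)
The plan is to recast the initial–boundary value problem as an abstract semilinear parabolic equation on a Hilbert space and to apply the standard local existence theory for abstract semilinear parabolic equations (Amann \cite{Amann76}). Writing $W=(u,v)^\top$, the problem takes the form
\begin{equation*}
W' = \cA W + \cG(W), \qquad W(0)=W_0:=(u_0,v_0)^\top,
\end{equation*}
on $X:=L^2(\Omega)\times L^2(\Omega)$, where $\cA:=\mathrm{diag}(d\Delta,\Delta)-\mathrm{diag}(0,\alpha)$ is the diagonal operator with domain $D(\cA)=\{W\in H^2(\Omega)^2:\partial W/\partial\hat{\nu}=0\text{ on }\partial\Omega\}$, and
\begin{equation*}
\cG(W)=\Big(\delta\,k(u+v)\!\int_\Omega u\,dy,\ (1-\delta)\,k(u+v)\!\int_\Omega u\,dy+k(u+v)\!\int_\Omega v\,dy\Big)^\top
\end{equation*}
collects the non-local reaction terms. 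First I would check that $\cA$ is sectorial and generates an analytic $C_0$-semigroup $\{e^{t\cA}\}_{t\ge0}$ on $X$: the Neumann Laplacian is self-adjoint and non-positive with compact resolvent on $L^2(\Omega)$, the diagonal structure preserves this, and the bounded perturbation $-\mathrm{diag}(0,\alpha)$ does not affect sectoriality. Elliptic regularity identifies $D(\cA)$ as stated and fixes the scale of fractional power spaces $X^\theta$, with $X^\theta\hookrightarrow H^{2\theta}(\Omega)^2\hookrightarrow C(\overline{\Omega})^2$ as soon as $2\theta>n/2$.

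The heart of the matter — and the step I expect to be the main obstacle — is the local Lipschitz continuity of $\cG$ on the appropriate space. Two ingredients enter: (i) by \eqref{hypk} (and in particular for the model kernel \eqref{ksig}), $k$ is bounded and globally Lipschitz on $[0,\infty)$, so $\|k(p_1)-k(p_2)\|_{L^2}\le L\|p_1-p_2\|_{L^2}$ and $\|k(p)\|_{L^\infty}\le k(0)$; (ii) the maps $W\mapsto\int_\Omega u\,dy$ and $W\mapsto\int_\Omega v\,dy$ are bounded linear functionals on $X$. Combining these with the multiplication properties of $H^2(\Omega)$ (which hold for $n\le3$), a routine estimate gives $\|\cG(W)-\cG(\tilde W)\|\le C(R)\,\|W-\tilde W\|$ for $W,\tilde W$ in a ball of radius $R$, in the norm of the relevant space. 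At the level of $X=L^2(\Omega)^2$ this already shows $\cG:X\to X$ is locally Lipschitz; the delicate point is that the conclusion $C^1((0,T);H^2(\Omega)^2)$ requires controlling $\cG$ with values in $H^2(\Omega)^2$, and differentiating $k(u+v)$ twice brings in the kink of the piecewise-$C^1$ kernel. One resolves this either by asking slightly more of $k$ (piecewise $C^2$, applying the product rule together with $H^1(\Omega)\hookrightarrow L^4(\Omega)$, valid for $n\le3$) or by working with a mollified kernel; this interplay between the non-smoothness of $k$, the non-locality, and the $H^2$ regularity claim is where the argument must be carried out carefully rather than quoted.

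Granting these two facts, the abstract theory delivers a unique mild solution $W\in C([0,\Tm);X^\theta)$ on a maximal interval of existence $[0,\Tm)$, for any $\theta<1$; choosing $\theta$ close to $1$ and taking $W_0\in D(\cA)$ (Neumann-compatible $H^2$ data, the natural hypothesis for the stated regularity up to $t=0$), one upgrades the mild solution to a strong one. Parabolic smoothing, together with the local Lipschitz bound on $\cG$ and a bootstrap in $\theta$, then yields $W\in C([0,T);H^2(\Omega)^2)\cap C^1((0,T);H^2(\Omega)^2)$ with $T=\Tm$, while uniqueness and continuous dependence on $W_0$ come directly out of the underlying contraction/Gronwall argument. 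One last bookkeeping point: under the Neumann conditions \eqref{Neumannbc} one imposes $k(x,y,p)=0$ for $x\notin\Omega$, so the non-local source terms are consistent with the closed Neumann realization $\cA$ of the Laplacian and generate no spurious boundary flux — which is what makes the abstract framework above the correct one for \eqref{HEHnd}.
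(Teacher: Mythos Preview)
Your proposal is correct and follows precisely the route indicated by the paper: the theorem is attributed to Maddalena \cite{Madd14} and the paper does not supply its own proof, noting only that ``the author establishes the existence of solutions applying classical arguments (cf.\ \cite{Amann76}).'' Your sketch --- recasting the problem as an abstract semilinear parabolic equation with a sectorial Neumann--Laplacian part and a locally Lipschitz non-local reaction, then invoking Amann's local existence theory --- is exactly that classical argument, so there is nothing further to compare.
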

Actually, by standard semigroup theory \cite{EN06,Kat75,Pazy83}, it is possible to show that the solution operator to \eqref{fast} with Neumann conditions constitute a semiflow on $L^2(\Omega) \times L^2(\Omega)$, that is, a $C_0$-semigroup $S_\delta(t)$ (indexed by $\delta \in [0,1]$). For the proof, one applies a general result by Kato \cite{Kat75} (see also Section \ref{secNeumi} below). Details are omitted.
\begin{theorem}
For each initial condition $(u_0, v_0) \in L^2(\Omega) \times L^2(\Omega)$, there exists $T > 0$ such that the Cauchy problem for system \eqref{fast}, subject to Neumann boundary conditions \eqref{Neumannbc} has a unique solution $(u,v) \in C((0,T); \cD \times \cD) \cap C^1([0,T); L^2(\Omega) \times L^2(\Omega))$ (with $\cD = \{ u \in C^\infty(\Omega) \, : \, \mathrm{tr} \, \partial_\nu u = 0 \, \text{on } \, \partial \Omega\}$ dense in $L^2(\Omega)$) which we denote as
\[
(u, v) = S_\delta(t) (u_0, v_0).
\]
Moreover, the family of operators $S_\delta(t) : L^2(\Omega) \times L^2(\Omega) \to L^2(\Omega) \times L^2(\Omega)$ constitute a $C_0$-semigroup for each $\delta \in [0,1]$.
\end{theorem}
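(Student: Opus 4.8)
The plan is to recast the Neumann initial--boundary value problem \eqref{HEHnd}, \eqref{Neumannbc} as a semilinear abstract Cauchy problem on $X := L^2(\Omega) \times L^2(\Omega)$,
\[
\mathbf{w}_t = A \mathbf{w} + F(\mathbf{w}), \qquad \mathbf{w}(0) = (u_0,v_0),
\]
with $\mathbf{w} = (u,v)$, $A := \mathrm{diag}(d \Delta_N, \Delta_N)$ the diagonal Neumann Laplacian of domain $D(A) = D_N \times D_N$, $D_N := \{ w \in H^2(\Omega) : \mathrm{tr}\,\partial_\nu w = 0 \}$, and
\[
F(u,v) := \Big( \delta\, k(u+v)\!\int_\Omega\! u\,dy,\ (1-\delta)\, k(u+v)\!\int_\Omega\! u\,dy - \alpha v + k(u+v)\!\int_\Omega\! v\,dy \Big)
\]
collecting the non-local reaction terms. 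The first step is to note that $-\Delta_N$ is self-adjoint and non-negative on $L^2(\Omega)$, with $D_N$ its domain by elliptic regularity on the smooth bounded $\Omega$; hence $-A$ is self-adjoint and non-negative on $X$ and, by the spectral theorem (equivalently by Lumer--Phillips, as in Kato \cite{Kat75}), $A$ generates an analytic $C_0$-semigroup of contractions $e^{tA}$ on $X$, for which $\cD \times \cD$ is a core, dense in $X$.

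The second step is to verify that $F : X \to X$ is Lipschitz on bounded subsets of $X$. This uses three elementary facts: (i) since $\Omega$ is bounded, $\ell(w) := \int_\Omega w\,dy$ is bounded linear on $L^2(\Omega)$, with $|\ell(w)| \le |\Omega|^{1/2}\|w\|_{L^2}$; (ii) under \eqref{hypk} one has $0 \le k \le k(0) = 1$, and, extending $k$ by $1$ on $(-\infty,0)$, $k$ is globally Lipschitz on $\R$ --- in particular for the prototype \eqref{ksig} with $\sigma \ge 1$ --- so $p \mapsto k(p)$ maps $L^2(\Omega)$ boundedly into $L^\infty(\Omega)$ and Lipschitz-continuously into $L^2(\Omega)$; (iii) $v \mapsto -\alpha v$ is bounded linear. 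Combining these with the bilinear bound $\| f_1 c_1 - f_2 c_2 \|_{L^2} \le |c_1 - c_2|\,\|f_1\|_{L^2} + |c_2|\,\|f_1 - f_2\|_{L^2}$ ($f_i \in L^2(\Omega)$, $c_i \in \R$) gives, for $\mathbf{w}_1,\mathbf{w}_2$ in the ball of radius $R$ of $X$, an estimate $\| F(\mathbf{w}_1) - F(\mathbf{w}_2) \|_X \le C(R)\,\| \mathbf{w}_1 - \mathbf{w}_2 \|_X$, with $C(R)$ depending continuously (indeed linearly) on $\delta \in [0,1]$.

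With $A$ the generator of an analytic semigroup and $F$ locally Lipschitz, the standard theory of semilinear parabolic equations (see, e.g., Pazy \cite{Pazy83}) then yields a maximal $T = T(u_0,v_0) \in (0,\infty]$ and a unique mild solution $\mathbf{w} \in C([0,T);X)$ of $\mathbf{w}(t) = e^{tA}(u_0,v_0) + \int_0^t e^{(t-s)A}F(\mathbf{w}(s))\,ds$; analytic smoothing of $e^{tA}$ together with continuity of $t \mapsto F(\mathbf{w}(t))$ upgrades $\mathbf{w}$ to a classical solution with $\mathbf{w}(t) \in D(A)$ for $t \in (0,T)$ and $\mathbf{w} \in C((0,T);D(A)) \cap C^1((0,T);X)$ --- the asserted regularity, since $D_N$ is the $H^2$-closure of $\cD$ (and $C^1$ up to $t=0$ holds when in addition $(u_0,v_0) \in D(A)$). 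That $T = +\infty$ follows from an a priori $L^2$ bound: testing the two equations of \eqref{HEHnd} by $u$ and $v$, using $|k| \le 1$ and $|\ell(w)| \le |\Omega|^{1/2}\|w\|_{L^2}$ to dominate the non-local terms and dropping the non-positive Dirichlet forms, one gets $\tfrac{d}{dt}\big( \|u\|_{L^2}^2 + \|v\|_{L^2}^2 \big) \le C\big( \|u\|_{L^2}^2 + \|v\|_{L^2}^2 \big)$ on $[\epsilon,T)$, whence by Gronwall (and continuity down to $t=0$) no finite-time blow-up occurs, forcing $T = +\infty$. Finally, putting $S_\delta(t)(u_0,v_0) := \mathbf{w}(t)$, the relations $S_\delta(0) = \mathrm{Id}$ and $S_\delta(t+s) = S_\delta(t)S_\delta(s)$ follow from uniqueness, and strong continuity $t \mapsto S_\delta(t)(u_0,v_0)$ on $X$ is just $\mathbf{w} \in C([0,T);X)$; hence $\{S_\delta(t)\}_{t \ge 0}$ is a (nonlinear) strongly continuous semiflow on $X$, for each $\delta \in [0,1]$.

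I do not expect a genuine obstacle: once the linear/nonlinear split above is made, the argument is a textbook application. The only points calling for care are the precise identification of $D(A)$ and the verification that $\cD$ is a core for the Neumann Laplacian (elliptic regularity plus density of smooth functions meeting the boundary condition); the fact that $k$ is only \emph{piecewise} differentiable, which still delivers a globally Lipschitz Nemytskii operator --- enough for existence, uniqueness and the classical regularity above --- but precludes any $C^1$ dependence of the vector field on $\mathbf{w}$, so that no smoothing beyond $C((0,T);D(A))$ should be expected without extra assumptions on $k$; and the justification of the energy estimate, which must first be carried out on $[\epsilon,T)$ where $\mathbf{w}$ is classical and then passed to the limit $\epsilon \to 0^+$. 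This is why in the paper the statement can be recorded with the details omitted.
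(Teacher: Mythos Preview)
Your proposal is correct and follows essentially the same route as the paper, which explicitly omits the details and refers to standard semigroup theory (Kato, Pazy) together with the decomposition into the Neumann Laplacian generator plus a regular nonlinearity worked out in Section~\ref{secNeumi}. Two minor differences are worth noting: the paper (in Lemma~\ref{lemdos}) checks continuous Fr\'echet differentiability of the nonlinearity rather than your local Lipschitz condition---your choice is arguably the more robust one given that $k$ is only assumed piecewise differentiable---and you go further than the stated theorem by deriving an a~priori $L^2$ energy estimate to obtain $T=+\infty$, whereas the paper only asserts local-in-time existence.
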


It is also shown in \cite{Madd14} that the region $R = [0, 1] \times [0, k^{-1}(\alpha)]$ is invariant under the flow $S_\delta(t)$ (provided that $k^{-1}(\alpha)$ exists: that is, for $\alpha < 1$ under hypotheses \eqref{hypk}). One of the main observations in \cite{Madd14} is the non-existence of spatially inhomogeneous steady states.
\begin{lemma}[Maddalena \cite{Madd14}]
\label{lemMadda}
All stationary solutions $(u,v)(x)$ to system \eqref{HEHnd} with Neumann boundary conditions \eqref{Neumannbc} are constant states.
\end{lemma}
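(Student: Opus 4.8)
The plan is to test the stationary equations against constants -- i.e.\ to integrate them over $\Omega$ -- and use the nonlocal structure to collapse the problem into two elementary cases.

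First I would record the stationary system. Setting $u_t = v_t = 0$ in \eqref{HEHnd} and writing the now-fixed constants $A := \int_\Omega u(x)\,dx$, $B := \int_\Omega v(x)\,dx$ and $p = u+v$, a stationary solution satisfies
\[
d\,\Delta u = -\delta A\, k(p), \qquad \Delta v = -(1-\delta)A\, k(p) + \alpha v - B\, k(p) \quad \text{in } \Omega,
\]
together with $\partial u/\partial\hat{\nu} = \partial v/\partial\hat{\nu} = 0$ on $\partial\Omega$. Every manipulation below is an integration by parts that uses only this Neumann condition, so it suffices to take $(u,v)\in H^2(\Omega)\times H^2(\Omega)$, the regularity furnished by the existence theory. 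Integrating the first equation over $\Omega$ and using $\int_\Omega\Delta u = 0$ gives $\delta A K = 0$, where $K := \int_\Omega k(p(x))\,dx$. I would treat the case $\delta>0$ (for $\delta = 0$ the first equation is already $\Delta u = 0$ and the argument only simplifies); then $AK = 0$, and I would split according to which factor vanishes.

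If $K = 0$, then since $k \ge 0$ on $[0,\infty)$ by \eqref{hypk} and $p \ge 0$, necessarily $k(p)\equiv 0$ a.e.\ in $\Omega$; the coupling terms drop, the system decouples into $\Delta u = 0$ and $\Delta v = \alpha v$ under Neumann conditions, and these force $u \equiv \text{const}$ and (testing the second against $v$, and using $\alpha>0$) $v \equiv 0$. If instead $A = 0$, then the first equation becomes $d\,\Delta u = 0$, so $u$ is a zero-mean constant, i.e.\ $u\equiv 0$, and $v$ solves the scalar Neumann problem $\Delta v = \alpha v - B\, k(v)$ with $B = \int_\Omega v$. Integrating this once more over $\Omega$ gives $B\big(\int_\Omega k(v)\,dx - \alpha\big) = 0$; if $B = 0$ then $v\equiv 0$ as above, and if $B\ne 0$ -- with $B>0$ for the nonnegative solutions that are relevant, i.e.\ those lying in the invariant region $R$ -- I would set $g(s) := \alpha s - B\, k(s)$. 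By \eqref{hypk}, $g$ is continuous, strictly increasing on $[0,1)$ and nondecreasing on $[0,\infty)$, with $g(0) = -B\, k(0) < 0$ and $g(1) = \alpha > 0$, hence there is a constant $\theta\in(0,1)$ with $g(\theta) = 0$; since $\theta$ solves the same Neumann problem, from $\Delta(v-\theta) = g(v) - g(\theta)$ and testing against $v-\theta$ one gets
\[
-\int_\Omega |\nabla(v-\theta)|^2\,dx = \int_\Omega \big(g(v) - g(\theta)\big)(v-\theta)\,dx \ge 0
\]
by monotonicity of $g$, so $v$ is constant. In every case $(u,v)$ is a constant state.

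The step I expect to be the genuine obstacle is the branch $A = 0$: the branch $K = 0$ closes at once, but here one is left with an honest nonlinear scalar elliptic problem and must (i) integrate the $v$-equation a second time to pin down $\int_\Omega k(v) = \alpha$, (ii) recognize that hypotheses \eqref{hypk} make the reduced nonlinearity $g$ monotone, so that the energy identity is sign-definite, and (iii) produce the constant equilibrium $\theta$ to compare against. This is also the place where admissibility (nonnegativity) of the solutions is used, through $B \ge 0$: without it, $g$ need not be monotone over the range of $v$ and the comparison step breaks down.
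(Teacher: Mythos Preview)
Your argument is correct (for nonnegative stationary solutions, which is the physically relevant class here) and takes a genuinely different route from the one the paper attributes to Maddalena. The paper's sketch is an \emph{entropy method for the evolution problem}: one tracks the functional $\mathcal{E}(t)=\int_\Omega(\nabla u_t\cdot\nabla u+\nabla v_t\cdot\nabla v)\,dx$ along the flow and shows that the gradient energy $\|\nabla u\|_{L^2}^2+\|\nabla v\|_{L^2}^2$ is driven to zero, so that no spatially inhomogeneous state can persist as an equilibrium. You instead work directly on the stationary elliptic system: integrating the $u$-equation forces the dichotomy $K=0$ or $A=0$, and in the nontrivial branch the monotonicity of $g(s)=\alpha s - Bk(s)$ (which uses $B\ge 0$ together with the hypothesis that $k$ is decreasing) lets a single energy identity against the constant comparison state $\theta$ finish the job. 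The dynamic approach yields slightly more --- asymptotic homogenization of \emph{all} solutions, not only of equilibria --- but it presupposes the well-posedness of the evolution; your static argument is more elementary, self-contained, and makes transparent exactly where the structural hypotheses \eqref{hypk} on $k$ and the nonnegativity of the densities enter. Your own diagnosis of the delicate step (the $A=0$ branch, needing $B\ge 0$ for monotonicity) is accurate.
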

\begin{remark}
\label{remXXX}
The strategy to prove this result relies on functional inequalities or so called ``entropy methods". Indeed, by considering an energy functional of the form
\[
\cE(t) = \int_\Omega (\nabla u_t \cdot \nabla u + \nabla v_t \cdot \nabla v) \, dx
\]
for solutions $(u,v)$ to \eqref{HEHnd} under Neumann conditions, Maddalena \cite{Madd14} shows that $\lim_{t \to \infty} \cE(t) = 0$, precluding the existence of spatially inhomogeneous attractors for the Neumann semiflow associated to \eqref{HEHnd}. This implies, in turn, that the only equilibrium solutions to the PDE system are constant steady states of the associated ODE system. 
\end{remark}

Consequently, upon inspection of \eqref{ODEs}, one finds that the only stationary solutions to \eqref{HEHnd} under Neumann conditions are the states \eqref{states}. By performing a linearized stability analysis of the steady states \eqref{states} as solutions to system \eqref{HEHnd} under Neumann conditions, Maddalena establishes that: $P_1 = (1,0)$ and $P_2 = (0, v_*(\alpha))$ are asymptotically stable (stable nodes), whereas $P_0 = (0,0)$ is a stable node provided that $- \mu_1 d + k(0) < 0$ and $- \mu_1 + k(0) - \alpha < 0$, where $\mu_1 = \mu_1(\Omega) > 0$ is the first non-zero eigenvalue of the Neumann Laplacian, $- \Delta_N$ in $\Omega$. Otherwise $P_0$ is a saddle. This is the main result of the analysis in \cite{Madd14}: the action of diffusion changes the stability properties of the very same equilibrium states as in the ODE reduction. The author, however, does not examine the rise of paradoxical tumor growth near $P_1$, for instance.

\subsection{Coexistence states under Dirichlet boundary conditions}

Delgado \textit{et al.} \cite{DeDS18} recently studied the stationary version of model \eqref{HEHm}, which is the non-local elliptic PDE system
\begin{equation}
\label{HEHstat}
\begin{aligned}
0 &= D_u \Delta u + \delta \gamma k(u+v) \int_{\Omega} u(y,t) \, dy, \\
0 &= D_v \Delta v + (1-\delta) \gamma k(u+v)  \int_{\Omega} u(y,t) \, dy - \bar{\alpha} v + \rho k(u+v) \int_\Omega v(y,t) \, dy,
\end{aligned}
\end{equation}
for $x \in \Omega$, under Dirichlet boundary conditions \eqref{Dirichletbc}. The authors in \cite{DeDS18} show that there are non-trivial stationary solutions with positive components $u$ and $v$ for the non-local elliptic system \eqref{HEHstat}. These solutions correspond to \textit{coexistence} states for the evolutionary system of equations \eqref{HEHm} (and of the normalized system \eqref{HEHnd}, of course). Unlike systems \eqref{HEHnd} with Neumann boundary conditions and the reduced ODE system \eqref{ODEs} of Hillen and co-authors, under Dirichlet boundary conditions there exist steady states for which both populations of CCs and CSCs may coexist. For instance, if $\delta < 1$ is fixed and $\alpha$ is increased, these non-trivial equilibria arise.  Their analysis is based on bifurcation theory in the spirit of Crandall and Rabinowitz \cite{CrR71} and fixed point index theory in conic domains. It is to be observed, however, that the authors do not analyze the stability of such states under the flow of the parabolic system of equations of evolution \eqref{HEHm}.

Under the light of these results, we turn our attention to the question whether there is paradoxical tumor growth when diffusion is switched on. The answer must take into account the choice of boundary conditions and must certainly relate to the extension of geometrical singular perturbation theory to semiflows (i.e. to PDEs), known as invariant foliation theory. In the sequel we analyze the easiest case where, under Neumann conditions, the only stationary solutions are constant equilibria of the associated ODE.

\section{Application of invariant foliation theory}
\label{secfoliation}

\subsection{The invariant manifold theorems}

The extension of geometrical singular perturbation theory to infinite dimensional spaces (e.g., to PDEs) is quite difficult. There are, however, some extensions of Fenichel theorems to semiflows which can be interpreted as the first step for a geometrical singular perturbation theory for infinite-dimensional systems with a fast/slow structure.

Consider a Banach space $X$ and a (fast) semiflow $S_0(t)$ on $X$. It is assumed the existence of a $C^1$ compact connected manifold $\cM_0 \subset X$, invariant under the fast semiflow $S_0(t)$, which is normally hyperbolic with respect to the fast semiflow (see Remark \ref{RemiMafia}). For any mapping $F$ on a bounded subset $B \subset X$ we define $\| F \|_0 := \sup \{ \| F(x)\|_X \, : \, x \in B\}$ and $\| F \|_1 := \| F \|_0 + \| DF \|_0$. The following theorem, due to Bates, Lu and Zeng \cite{BaLZ98,BaLZ00}, establishes the conditions for the existence of an invariant manifold $\cM_\delta$, invariant under the flow of the full system, that remains close to $\cM_0$ (the shortened version of the result presented here is that of Kuehn \cite{Kueh15}; see Theorem 18.2.1).
\begin{theorem}[Bates, Lu, Zeng \cite{BaLZ98}]
\label{teogordo}
Suppose $S_0(t)$ is a semiflow on $X$ and $\cM_0$ is a $C^1$ compact connected manifold, invariant under $S_0(t)$ and normally hyperbolic. Fix $t_1 > t_0$ for some $t_0 > 0$ and let $N_0$ be a sufficiently small tubular neighborhood of $\cM_0$. For $\delta > 0$ sufficiently small there exists $\theta = \theta(\delta) > 0$ such that, if $S_\delta(t)$ is a $C^1$-semiflow satisfying 
\[
\| S_\delta(t_1) - S_0(t_1) \|_1 \leq \theta(\delta), \quad \text{and,} \quad \| S_\delta(t) - S_0(t) \|_0 \leq \theta(\delta), \quad \text{for all } \; \; t \in [0,t_1],
\]
with norms taken with respect to a small neighborhood $B$ of $\cM_0$ such that $N_0 \subset B$, then the semiflow $S_\delta(t)$ has a $C^1$, compact, connected, normally hyperbolic invariant manifold $\cM_\delta$ near $\cM_0$, such that $\cM_\delta$ converges to $\cM_0$ in the $C^1$ topology as $\| S_\delta(t_1) - S_0(t_1) \| \to 0$.
\end{theorem}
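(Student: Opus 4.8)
The plan is to prove the theorem in two stages. First I would reduce the statement about the semiflow $S_\delta(t)$ to a persistence statement for the single time-$t_1$ map $g_\delta := S_\delta(t_1)$, which by hypothesis is a $C^1$ map that is $C^1$-close to $g_0 := S_0(t_1)$ on the neighborhood $B$. Then I would construct the persistent manifold $\cM_\delta$ for the discrete system generated by $g_\delta$ by a Lyapunov--Perron fixed-point argument adapted to the (possibly non-invertible) semiflow setting, and finally transfer invariance back to the whole semiflow.

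For the reduction, note that $\cM_0$ is a $C^1$ compact connected manifold invariant under $g_0$ and normally hyperbolic for it (the spectral-gap inequalities for $g_0$ follow from those for the semiflow evaluated at time $t_1$). Once a nearby $g_\delta$-invariant manifold $\cM_\delta$ is produced, its invariance under the full semiflow follows from the semigroup property: for each $s\in[0,t_1]$ the set $S_\delta(s)\cM_\delta$ is again invariant under $g_\delta$ and, by the $C^0$-closeness hypothesis on $[0,t_1]$, remains inside the tubular neighborhood $N_0$; uniqueness of the persistent manifold in $N_0$ then forces $S_\delta(s)\cM_\delta=\cM_\delta$. So it suffices to treat the map $g_\delta$.

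To construct $\cM_\delta$, I would use normal hyperbolicity to build a tubular neighborhood of $\cM_0$ modeled on a closed complementary subbundle (the ``normal bundle'') of $T\cM_0$ in $TX|_{\cM_0}$, splitting its fibers into stable and unstable subbundles according to the exponential rates of $Dg_0$ along $\cM_0$. After a smooth cut-off of $g_\delta-g_0$ outside a small neighborhood of $\cM_0$ — legitimate because $\cM_0$ is compact — one obtains a globally defined, uniformly small $C^1$ perturbation of $g_0$. The candidate manifold is sought as the graph $\{\, x+h(x):x\in\cM_0\,\}$ of a section $h$ of the normal bundle, and the invariance requirement is rewritten, via the variation-of-constants formula along $\cM_0$, as a fixed-point equation $h=\cT_\delta(h)$ in which the stable component of a point on $\cM_\delta$ is expressed as a convergent forward sum and the unstable component is the unique bounded backward selection; this is exactly the device that avoids inverting the semiflow. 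On the Banach space of Lipschitz sections of small Lipschitz constant over the compact base $\cM_0$, the operator $\cT_\delta$ is a contraction once $\delta$ (hence the perturbation size $\theta(\delta)$) is small enough and the rate/gap inequalities hold; its unique fixed point $h_\delta$ defines $\cM_\delta$, which is automatically compact and connected as a continuous graph over $\cM_0$.

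Finally, $C^1$ regularity of $\cM_\delta$ follows by the fiber-contraction theorem: one runs the analogous fixed-point scheme for the formal derivative of $h$, shows it has a unique fixed point, and checks that this fixed point is genuinely $Dh_\delta$, so $h_\delta\in C^1$. Normal hyperbolicity of $\cM_\delta$ persists because the spectral-gap conditions are open and the linearization of $g_\delta$ along $\cM_\delta$ is $C^0$-close to that of $g_0$ along $\cM_0$, while the $C^1$ convergence $\cM_\delta\to\cM_0$ is the assertion that $h_\delta\to0$ in $C^1$, which follows from the Lipschitz dependence of the fixed point on the perturbation together with $h_0=0$. The main obstacle throughout is the absence of a backward semiflow in infinite dimensions and the possible lack of smoothing: the unstable directions cannot be recovered by reversing time, so the Lyapunov--Perron equation must be arranged to use only forward evolution while still pinning down the unstable coordinates, and every estimate must be carried out purely in the $C^0/C^1$ category; compactness of $\cM_0$ is what makes the cut-off construction and the uniform rate estimates possible.
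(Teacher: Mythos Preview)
The paper does not prove this theorem at all: it is quoted as a result of Bates, Lu and Zeng \cite{BaLZ98,BaLZ00} (in the shortened form given by Kuehn \cite{Kueh15}, Theorem 18.2.1) and is used as a black box. There is therefore no ``paper's own proof'' to compare your attempt against.

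That said, your sketch is a faithful outline of the strategy actually used in \cite{BaLZ98}: reduce to the time-$t_1$ map, set up a Lyapunov--Perron type fixed-point problem for a section of the normal bundle over the compact base $\cM_0$, use the spectral gap to get a contraction, upgrade to $C^1$ via fiber contraction, and recover invariance for the full semiflow from uniqueness plus the $C^0$-closeness on $[0,t_1]$. The emphasis you place on handling the lack of backward evolution (so that unstable coordinates must be captured without inverting the semiflow) is exactly the technical point that distinguishes the Bates--Lu--Zeng result from the finite-dimensional Fenichel theory. For the purposes of this paper, however, none of this needs to be reproduced; a citation suffices.
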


\begin{remark}
\label{RemiMafia}
In the present setting, normal hyperbolicity refers intuitively to the property that the flow in directions that are normal to $\cM_0$ dominate the flow in the tangent directions (just like normal hyperbolicity for ODEs \cite{Hek10,Jon95}). The term flow now refers to the semiflow generated by the PDE. See conditions (i)-(iii) in \cite{Kueh15}, section 18.2, for the precise statement and definitions.  
\end{remark}

In the next section, we shall verify that there exists a fast/slow structure associated to system \eqref{HEHnd} for which essentially the same slow manifold studied by Hillen \textit{et al.} \cite{HEH13} is invariant under the fast semiflow associated to the PDE. The key feature is that, under Neumann boundary conditions, the only stationary solutions to the PDE are the equilibrium constant states of the ODE. We gloss over some of the technical details and concentrate on the main properties: existence of the semiflow, invariance of the slow manifold under the fast semiflow, and normal hyperbolicity.

\subsection{Approximation with Neumann boundary conditions}
\label{secNeumi}

Consider the non-local, non-dimensional system \eqref{HEHnd} subject to Neumann boundary conditions \eqref{Neumannbc} and initial conditions of the form $(u,v)(x,0) = (u_0, v_0) \in L^2(\Omega) \times L^2(\Omega)$. We then define the ``fast" system by setting $\delta = 0$. The result is
\begin{equation}
\label{fast}
\begin{aligned}
u_t &= d \Delta u,\\
v_t &= \Delta v - \alpha v + k(p) \int_{\Omega} p(y,t) \, dy,
\end{aligned} \quad \qquad x \in \Omega, \;\; t > 0,
\end{equation}
where $p = u + v$ and subject to boundary conditions of Neumann type $\partial_\nu u = \partial_\nu v = 0$ on $\partial \Omega$.

The first observation is that the non existence of inhomogeneous steady states for system \eqref{fast}, thanks to the boundary conditions of Neumann type. This is a straightforward consequence of the analysis in \cite{Madd14}.

\begin{lemma}
\label{lemuno}
The only stationary solutions $(u,v)(x)$ to the fast system \eqref{fast} under Neumann boundary conditions \eqref{Neumannbc} are constant states.
\end{lemma}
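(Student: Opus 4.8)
The plan is to give a direct, elementary argument; the statement is in fact the special case $\delta=0$ of Lemma~\ref{lemMadda}, but here the $u$-component decouples, which allows a short self-contained proof. First I would observe that, with $\delta=0$, the first equation of \eqref{fast} becomes $\Delta u=0$ in $\Omega$ with $\partial_\nu u=0$ on $\partial\Omega$; multiplying by $u$ and integrating by parts gives $\int_\Omega|\nabla u|^2\,dx=0$, whence $u\equiv c$ for a constant $c\ge 0$ (we restrict to the cone of nonnegative densities, which contains the invariant region $R$). Consequently $M:=\int_\Omega p(y)\,dy=c|\Omega|+\int_\Omega v(y)\,dy$ is a fixed nonnegative number and $p=c+v$, so the second equation reduces to the scalar semilinear Neumann problem
\[
\Delta v=\alpha v-M\,k(c+v)=:h(v)\quad\text{in }\Omega,\qquad \partial_\nu v=0\quad\text{on }\partial\Omega .
\]

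The core step is to show that any solution of this scalar problem is constant, exploiting the monotonicity of $h$. Since $k$ is nonincreasing on $[0,\infty)$ by \eqref{hypk} and $M\ge 0$, the map $s\mapsto -M\,k(c+s)$ is nondecreasing, so $h(a)-h(b)\ge\alpha(a-b)$ whenever $a\ge b$; in particular $h$ is strictly increasing and Lipschitz. I would first integrate $\Delta v=h(v)$ over $\Omega$: by the Neumann condition $\int_\Omega h(v)\,dx=0$, and since $h(v)\le h(w_{\max})$ pointwise, where $w_{\max}:=\max_{\overline{\Omega}}v$, this forces $h(w_{\max})\ge 0$. Next, pick a measurable function $b(x)\ge\alpha$ with $h(v(x))-h(w_{\max})=b(x)\,(v(x)-w_{\max})$ (possible because $h(a)-h(b)\ge\alpha(a-b)$ for $a\ge b$). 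Then $\psi:=v-w_{\max}\le 0$ satisfies $\Delta\psi-b(x)\psi=h(w_{\max})\ge 0$ with $b\ge 0$; that is, $\psi$ is a subsolution of a uniformly elliptic operator with nonpositive zeroth-order coefficient, attaining its maximum value $0$. By the strong maximum principle, if this maximum is attained at an interior point then $\psi\equiv 0$; if it is attained only on $\partial\Omega$, the Hopf boundary-point lemma yields $\partial_\nu\psi>0$ there, contradicting $\partial_\nu v=0$. Either way $v\equiv w_{\max}$ is constant, and together with $u\equiv c$ this gives the claim.

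I do not expect a genuine obstacle: the lemma is routine once one notices that the $u$-equation decouples. The only point needing care is the sign of the zeroth-order coefficient $b(x)$ in the maximum-principle step, which is exactly where the monotonicity of $k$ from \eqref{hypk} (together with $M\ge 0$) enters; and the mere piecewise differentiability of $k$ is harmless, since only its monotonicity, not its regularity, is invoked. Standard elliptic regularity renders the stationary solution smooth enough for all of the above.
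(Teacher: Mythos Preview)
Your argument is correct. The decoupling of the $u$-equation at $\delta=0$ makes the first step immediate, and the reduction of the $v$-equation to a scalar Neumann problem $\Delta v=h(v)$ with $h$ strictly increasing (thanks to $M\ge 0$ and the monotonicity of $k$) is exactly what is needed to run the maximum-principle/Hopf argument. The sign condition $b(x)\ge\alpha>0$ is verified correctly, and the restriction to nonnegative densities is the biologically relevant regime in which the lemma is actually used.

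Your route, however, is genuinely different from the paper's. The paper does not argue elliptically at all: it simply invokes Maddalena's analysis \cite{Madd14}, which is a parabolic energy method---one studies an entropy-type functional $\cE(t)=\int_\Omega(\nabla u_t\cdot\nabla u+\nabla v_t\cdot\nabla v)\,dx$ along the evolution of the full system \eqref{HEHnd} and shows $\cE(t)\to 0$, thereby ruling out spatially inhomogeneous attractors for any $\delta\in[0,1]$; Lemma~\ref{lemuno} is then just the specialization to $\delta=0$. What that approach buys is uniformity in $\delta$: one argument covers both Lemma~\ref{lemMadda} and Lemma~\ref{lemuno}. What your approach buys is that it is self-contained, elementary, and purely static---no semiflow, no long-time limits---at the price of exploiting the special structure at $\delta=0$ (the decoupling of $u$), which does not extend to $\delta>0$.
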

\begin{proof}
Follows directly from the analysis of Maddalena \cite{Madd14} (section 4). (In fact, \eqref{fast} is a particular case of the system considered in \cite{Madd14} with $\delta = 0$; the proof goes verbatim; see also Remark \ref{remXXX}.)
\end{proof}

Consequently, the only equilibrium solutions to \eqref{fast} in $L^2$ are equilibrium constant states of the associated ODE system. Hence, the slow invariant manifold $\cM_0$ under the semiflow of the PDE system \eqref{fast} is essentially the same as in \cite{HEH13}:
\[
\cM_0 = \{ (u,v) \in [0,1] \times [0,v_*(\alpha)] \, : \, M(u,v) = 0 \}.
\]
\[
M(u,v) := k(p) |\Omega| p - \alpha v, \qquad p = u+v.
\]

It is well-known that solutions to the Neumann problem,
\[
w_t = \Delta w, \quad \text{in } \Omega, \qquad \partial_\nu w = 0, \quad \text{on } \partial \Omega,
\]
define an analytic semigroup in $L^2(\Omega)$. In fact, one may consider the Neumann Laplacian $-\Delta_N$ with dense domain $\cD(-\Delta_N) = H^2(\Omega)^\nu := \{ w \in H^2(\Omega) \, : \, \text{tr } \partial_\nu w = 0\}$ acting on $L^2(\Omega)$ as an infinitesimal generator (cf. \cite{HTr08}). Hence, let us consider the operator
\[
\begin{aligned}
A_N \, : \, \cD(A_N) &= H^2(\Omega)^\nu \times H^2(\Omega)^\nu \rightarrow L^2(\Omega) \times L^2(\Omega),\\
A_N \begin{pmatrix} u \\ v \end{pmatrix} &:= - \begin{pmatrix} d & 0 \\ 0 & 1\end{pmatrix} \Delta_N \begin{pmatrix} u \\ v \end{pmatrix}
\end{aligned}
\]
The fast system \eqref{fast} is then recast as
\[
\partial_t \begin{pmatrix} u \\ v \end{pmatrix} = - A_N \begin{pmatrix} u \\ v \end{pmatrix} + \bar{G}(u,v),
\]
where
\[
\bar{G}(u,v) = \begin{pmatrix} 0 \\ G(u,v) \end{pmatrix}, \qquad G(u,v) = -\alpha v + k(p) \int_\Omega p(y,t) \, dy.
\]

To define the fast semiflow associated to solutions to \eqref{fast} we just need to make an observation and to apply standard semigroup theory.
\begin{lemma}
\label{lemdos}
$\bar{G} = \bar{G}(u,v)$ is continuously Fr\'echet differentiable in $L^2(\Omega) \times L^2(\Omega)$.
\end{lemma}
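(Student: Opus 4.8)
The plan is to separate the affine part of $\bar G$ from its genuinely nonlinear, nonlocal part and to treat the latter by a Leibniz-rule argument. Writing $\bar G(u,v)=(0,G(u,v))$ with $G(u,v)=-\alpha v+\Phi(u,v)$ and
\[
\Phi(u,v):=k(p)\,I(p),\qquad p:=u+v,\quad I(w):=\int_\Omega w(y)\,dy,
\]
the map $(u,v)\mapsto(0,-\alpha v)$ is bounded linear on $L^2(\Omega)\times L^2(\Omega)$, hence smooth with constant derivative. Thus it suffices to show that $\Phi\colon L^2(\Omega)\times L^2(\Omega)\to L^2(\Omega)$ is continuously Fréchet differentiable; and since $(u,v)\mapsto p=u+v$ is bounded linear, by the chain rule it is enough to prove that $p\mapsto k(p)\,I(p)$ is $C^1$ from $L^2(\Omega)$ into $L^2(\Omega)$ and then compose.

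First I would dispose of the two factors. The averaging functional $I\colon L^2(\Omega)\to\R$ is bounded linear, with $|I(w)|\le|\Omega|^{1/2}\|w\|_{L^2}$ by Cauchy--Schwarz, so $I\in C^\infty$ and $DI=I$. For the superposition operator $N_k\colon p\mapsto k(p(\cdot))$ I would first extend $k$ to a bounded, globally Lipschitz function on all of $\R$ (for instance constant equal to $k(0)$ on $(-\infty,0)$), compatibly with hypotheses \eqref{hypk} and with $0\le k\le1$; then $N_k$ maps $L^2(\Omega)$ into $L^\infty(\Omega)\subset L^2(\Omega)$ and is globally Lipschitz, with the multiplication operator $h\mapsto k'(p)\,h$ as its candidate derivative. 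Assembling by the product rule yields the candidate
\[
D\bar G(u,v)[h_u,h_v]=\Big(0,\;-\alpha h_v+k'(p)(h_u+h_v)\,I(p)+k(p)\,I(h_u+h_v)\Big),
\]
a bounded linear operator on $L^2(\Omega)\times L^2(\Omega)$ because $k'(p)\in L^\infty(\Omega)$ and $k(p)\in L^2(\Omega)$.

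Next I would verify the two defining properties. Setting $h_p:=h_u+h_v$, the Fréchet remainder of $\Phi$ is
\[
R[h]=\big(k(p+h_p)-k(p)-k'(p)h_p\big)\,I(p)+\big(k(p+h_p)-k(p)\big)\,I(h_p).
\]
The second summand is $O(\|h\|_{L^2}^2)$, hence $o(\|h\|_{L^2})$, using the Lipschitz bound on $k$ together with $|I(h_p)|\le|\Omega|^{1/2}\|h_p\|_{L^2}$. For \emph{continuous} differentiability I would then show that $(u,v)\mapsto D\bar G(u,v)$ is continuous in operator norm, reducing to the continuity of $p\mapsto k(p)$ in $L^2(\Omega)$ and of the multiplier $p\mapsto k'(p)$, which I would approach via dominated convergence along the Nemytskii terms.

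The main obstacle, where essentially all the difficulty is concentrated, is the first summand of $R[h]$: the uniform control of the superposition remainder $\|k(p+h_p)-k(p)-k'(p)h_p\|_{L^2}=o(\|h_p\|_{L^2})$ demanded by Fréchet (rather than merely Gâteaux) differentiability. The subtlety is that an $L^2$-small increment $h_p$ need not be small in $L^\infty$, so a pointwise Taylor estimate carries no uniform modulus, and the kink of $k$ at $p=1$ concentrates the defect precisely on the set where $p+h_p$ crosses the threshold. My plan is to split $\Omega$ according to whether $p(x)$ and $p(x)+h_p(x)$ lie on the same differentiability piece of $k$: on the ``crossing'' set the defect is bounded pointwise by $C|h_p(x)|$, and one must show, using the absolute continuity of the integral together with the boundedness of $k'$, that this crossing contribution is negligible relative to $\|h_p\|_{L^2}$. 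The same concentration phenomenon also threatens the operator-norm continuity of $p\mapsto k'(p)$, so this is the step requiring the most care; should the plain $L^2$ estimate prove inadequate near the kink, the natural recourse is to run the argument on the bounded invariant region, where the relevant increments are additionally controlled in $L^\infty$.
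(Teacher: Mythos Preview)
Your candidate derivative coincides exactly with the one the paper writes down, and the overall decomposition (linear part $(u,v)\mapsto(0,-\alpha v)$ plus the nonlinear, nonlocal product $k(p)\,I(p)$) is the same in spirit. The paper's proof, however, is extremely terse: it simply asserts the expansion $G(w+h)=G(w)+DG(w)h+O(\|h\|_{L^2})$, records the formula for $DG(w)h$, and checks the three obvious bounds showing $DG(w)$ is bounded on $L^2\times L^2$. It does not isolate the Nemytskii remainder, does not discuss continuity of $w\mapsto DG(w)$, and does not mention the kink of $k$ at $p=1$.

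What you have flagged as ``the main obstacle'' is therefore a point you are treating more carefully than the paper itself. Your concern is legitimate: Fr\'echet differentiability of a superposition operator $p\mapsto k(p)$ from $L^2(\Omega)$ to $L^2(\Omega)$ is notoriously restrictive (for genuinely nonlinear $k$ it typically fails), and a merely piecewise-differentiable $k$ such as $k(p)=\max\{1-p^\sigma,0\}$ makes the pointwise Taylor control on the crossing set exactly as delicate as you describe. The paper's one-line assertion does not address this, so in that sense your proposal is not missing an idea relative to the paper---it is supplying one. Your proposed fallback (work on the bounded invariant region so that increments are also controlled in $L^\infty$) is the natural and standard remedy, and it is consistent with how the paper subsequently uses the lemma (only near the compact slow manifold $\cM_0\subset[0,1]\times[0,v_*(\alpha)]$). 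If you want the statement to hold literally on all of $L^2\times L^2$ for the piecewise $k$ of \eqref{ksig}, you would need to say more than either you or the paper currently does; if you are content with the lemma on a bounded $L^\infty$-neighbourhood of $\cM_0$, your outline is complete and sharper than the original.
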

\begin{proof}
It follows from the fact that, for each $h = (h_1, h_2) \in L^2(\Omega) \times L^2(\Omega)$ and each fixed $w = (u,v) \in L^2(\Omega) \times L^2(\Omega)$, there holds $G(w + h) = G(w) + DG(w)h + O(\|h\|_{L^2})$,
where
\[
DG(w)h = -\alpha h_2 + \big( k'(p) \int_\Omega p \, dx \big)(h_1 + h_2) + k(p) \int_\Omega (h_1 + h_2) \, dx,
\]
defines a bounded operator in $L^2$ in view that
\[
\begin{aligned}
\| -\alpha h_2 \|_{L^2} &\leq C_\alpha \| h\|_{L^2 \times L^2},\\
\| (k'(p) \int_\Omega p \, dx \big)(h_1 + h_2) \|_{L^2} &\leq C_{k,p,\Omega} \| h\|_{L^2 \times L^2},\\
\| k(p) \int_\Omega (h_1 + h_2) \, dx \|_{L^2} &\leq C_{k,p,\Omega} \| h\|_{L^2 \times L^2},
\end{aligned}
\]
for each fixed $(u,v) \in L^2(\Omega) \times L^2(\Omega)$.
\end{proof}

As a result, we may apply standard semigroup theory \cite{EN06,Pazy83} to conclude that for each initial condition $(u_0, v_0) \in L^2(\Omega) \times L^2(\Omega)$, the Cauchy problem for system \eqref{fast} subject to Neumann boundary conditions \eqref{Neumannbc} has a unique solution $(u,v) \in C((0,T); \cD(A_N)) \cap C^1([0,T); L^2(\Omega) \times L^2(\Omega))$ for some $T > 0$, which we denote as
\[
(u, v) = S_0(t) (u_0, v_0).
\]
(See, e.g., \cite{Kat75}.) The solution operator $S_0(t)$ is a $C_0$-semigroup (or semiflow) acting on $L^2(\Omega) \times L^2(\Omega)$. It is the \textit{fast semiflow} associated to \eqref{fast} under Neumann boundary conditions.

Thanks to the fact that the only equilibrium states for the fast system \eqref{fast} are constant states we have the following straightforward 
\begin{lemma}
\label{lemtres}
$\cM_0$ is invariant under the fast semiflow $S_0(t)$.
\end{lemma}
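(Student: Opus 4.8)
The plan is to show that $\cM_0$ consists entirely of \emph{equilibria} of the fast system \eqref{fast}; once this is established, invariance is automatic, since a semiflow fixes each of its equilibrium points, so that $S_0(t)\cM_0 = \cM_0$ trivially holds for every $t\geq 0$. Recall that a point of $\cM_0$ is a pair of real numbers $(u,v)\in[0,1]\times[0,v_*(\alpha)]$ satisfying $M(u,v)=k(p)\,|\Omega|\,p-\alpha v=0$ with $p=u+v$; I would view it as the pair of spatially constant functions $x\mapsto(u,v)$, which indeed lies in $L^2(\Omega)\times L^2(\Omega)$ because $\Omega$ is bounded, and in fact in $\cD(A_N)=H^2(\Omega)^\nu\times H^2(\Omega)^\nu$ since constant functions belong to $H^2(\Omega)$ and have vanishing normal trace.

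First I would substitute such a constant profile $(u,v)$ into the right-hand side of \eqref{fast}. The first equation yields $d\Delta u = 0$, while the second yields
\[
\Delta v - \alpha v + k(p)\int_\Omega p(y,t)\,dy \;=\; -\alpha v + k(p)\,|\Omega|\,p \;=\; M(u,v) \;=\; 0 .
\]
Hence $\partial_t(u,v)\equiv(0,0)$, i.e.\ the constant function $(u,v)$ is a global stationary solution of the fast Neumann problem; because it lies in $\cD(A_N)$, this solution is classical. By uniqueness of solutions for the $C_0$-semigroup $S_0(t)$ (furnished above via Lemma \ref{lemdos} and standard semigroup theory), it follows that $S_0(t)(u,v) = (u,v)$ for all $t\geq 0$. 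Applying this to each point of $\cM_0$ gives $S_0(t)\cM_0 = \cM_0$ for all $t\geq 0$, which is the desired invariance.

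There is essentially no obstacle here: the only thing worth recording is that the defining relation $M(u,v)=0$ of $\cM_0$ is precisely the equilibrium condition of \eqref{fast} restricted to spatially homogeneous profiles — a consequence of the Laplacian annihilating constants — and that no further stationary solutions can interfere, which is exactly the content of Lemma \ref{lemuno}. For the invariance statement itself only the implication ``$M(u,v)=0 \Rightarrow$ equilibrium'' is used; the converse, together with Lemma \ref{lemuno}, merely records that $\cM_0$ is the full set of equilibria of the fast Neumann semiflow inside $[0,1]\times[0,v_*(\alpha)]$. The genuine work — normal hyperbolicity of $\cM_0$ and the $C^1$-closeness of the semiflows required by Theorem \ref{teogordo} — belongs to the subsequent steps and is not at issue in this lemma.
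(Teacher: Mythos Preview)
Your argument is correct and is essentially the same as the paper's: pick $(u_M,v_M)\in\cM_0$, observe that as a constant profile it is a stationary solution of \eqref{fast} (since $M(u_M,v_M)=0$), and invoke uniqueness of the semiflow to get $S_0(t)(u_M,v_M)=(u_M,v_M)$. The paper's proof is just a terser version of what you wrote, without the explicit substitution or the side remarks about Lemma~\ref{lemuno}.
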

\begin{proof}
Take a constant state $(u_M, v_M) \in \cM_0 \subset \cD(A_N)$ (for which $k(p_M) |\Omega| p_M = \alpha v_M$). Then the results follows immediately from uniqueness of the solution $(u,v) = S_0(t) (u_M, v_M)$ to \eqref{fast} with initial condition $(u_M, v_M)$, and from observing that $(u,v) \equiv (u_M, v_M)$ is also a trivial solution to \eqref{fast}. Thus, $S_0(t) \cM_0 = \cM_0$ for all $t \in [0,T]$.
\end{proof}

\begin{lemma}
\label{lemcuatro}
$\cM_0$ is normally hyperbolic with respect to the semiflow $S_0(t)$.
\end{lemma}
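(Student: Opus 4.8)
The plan is to use that, by Lemmas \ref{lemuno} and \ref{lemtres}, every point of $\cM_0$ is a \emph{spatially constant equilibrium} of \eqref{fast}, so the flow on $\cM_0$ is trivial and the linearization of the fast semiflow at a base point $(u_M,v_M)\in\cM_0$ is the analytic $C_0$-semigroup $e^{tL_M}$ generated by the \emph{constant-coefficient} operator
\[
L_M := -A_N + D\bar G(u_M,v_M),
\]
where $p_M:=u_M+v_M$ and $D\bar G$ is the Fr\'echet derivative computed in Lemma \ref{lemdos}, evaluated at the constant state. Since $L_M$ is a bounded perturbation of the sectorial operator $-A_N$, it is sectorial with compact resolvent, so $\sigma(L_M)$ is discrete and the growth bound of $e^{tL_M}$ on each spectral subspace equals its spectral bound. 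Normal hyperbolicity in the sense of \cite{Kueh15} (Remark \ref{RemiMafia}) then reduces to producing, uniformly in $(u_M,v_M)\in\cM_0$, an $L_M$-invariant splitting $X=T_{(u_M,v_M)}\cM_0\oplus\cN_{(u_M,v_M)}$ in which the tangent line is the neutral ($0$) eigendirection and all of $\sigma(L_M)$ off that line is separated from the imaginary axis by a fixed gap.

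To get the splitting I would diagonalize $L_M$ against the Neumann eigenbasis $\{\varphi_j\}_{j\ge0}$, $-\Delta_N\varphi_j=\mu_j\varphi_j$, $0=\mu_0<\mu_1\le\mu_2\le\cdots\to\infty$ (with $\mu_1>0$ because $\Omega$ is connected), $\varphi_0\equiv|\Omega|^{-1/2}$. The only non-diagonal term of $L_M$, the nonlocal one $k(p_M)\int_\Omega(h_1+h_2)\,dx$, is a rank-one operator mapping into the constants, hence acts only on the $\varphi_0$-component; all other terms are multiplications by the constants $d,1,-\alpha$ and $c_M:=k'(p_M)|\Omega|p_M+k(p_M)|\Omega|$. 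Thus $L_M=\bigoplus_{j\ge0}B_j$ with, for $j\ge1$,
\[
B_j=\begin{pmatrix}-d\mu_j & 0 \\ k'(p_M)|\Omega|p_M & -\mu_j-\alpha+k'(p_M)|\Omega|p_M\end{pmatrix},
\]
and $B_0=\left(\begin{smallmatrix}0 & 0\\ c_M & c_M-\alpha\end{smallmatrix}\right)$, whose eigenvalues are $0$ and $\lambda_M:=c_M-\alpha$. Differentiating the constraint $M(u,v)=0$ along $\cM_0$ identifies $T_{(u_M,v_M)}\cM_0$ with precisely the (one-dimensional, constant) $0$-eigenspace of $B_0$, as it must be since $\cM_0$ is a curve of equilibria; I then take $\cN_{(u_M,v_M)}$ to be the $L_M$-invariant complement given by the spectral projection onto $\sigma(L_M)\setminus\{0\}$, i.e.\ the $\lambda_M$-eigendirection of $B_0$ together with the closed span of the blocks $B_j$, $j\ge1$.

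It remains to bound the rates uniformly. For $j\ge1$, since $\mu_j\ge\mu_1>0$, $\alpha>0$ and $k'\le0$ on $[0,1)$ by \eqref{hypk}, both eigenvalues of $B_j$ lie below $-\nu_1$ with $\nu_1:=\min\{d\mu_1,\mu_1+\alpha\}>0$, independently of the base point; the off-diagonal entries stay bounded (as $p_M\in[0,1]$), so $\|e^{tB_j}\|\le Ce^{-\nu_1 t}$ uniformly in $j$. For the $\varphi_0$-block, using $M(u_M,v_M)=0$, i.e.\ $k(p_M)|\Omega|p_M=\alpha v_M$, one rewrites $\lambda_M=k'(p_M)|\Omega|p_M-\alpha u_M/p_M\le0$ for $p_M>0$, which is strictly negative on the relevant part of $\cM_0$, so by compactness $\sup_{\cM_0}\lambda_M=-\beta_0<0$. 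With $\beta:=\min\{\beta_0,\nu_1\}>0$ this gives $\|e^{tL_M}|_{\cN}\|\le Ce^{-\beta t}$ uniformly, while $e^{tL_M}$ is the identity on $T_{(u_M,v_M)}\cM_0$; hence the normal contraction strictly dominates the (zero) tangential rate, the uniform spectral gap makes the projections depend $C^1$ on the base point along $\cM_0$ (as smooth as $k$ permits), and conditions (i)--(iii) in \cite{Kueh15}, Section 18.2, hold (with any order of normal hyperbolicity). This supplies exactly the hypothesis of Theorem \ref{teogordo}.

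The main obstacle is this last uniformity: keeping the single transverse eigenvalue $\lambda_M$ of the $\varphi_0$-block away from $0$ along all of $\cM_0$. At the pure-CSC end $p_M\to1$ one has $k(p_M)\to0$ and must use $k'(1^-)\le0$; at $P_0$ the expression for $\lambda_M$ degenerates ($p_M=0$), $\lambda_M=k(0)|\Omega|-\alpha$, and it can vanish at a non-generic treatment level, where $\cM_0$ stops being normally hyperbolic, so one takes for $\cM_0$ the maximal compact, connected, normally hyperbolic sub-arc, which is exactly the slow manifold carrying the long-time dynamics (and the paradox), as in \cite{HEH13}. A secondary technicality is that the kernel \eqref{ksig} has a corner at $p=1$, so for the $C^1$-semiflow requirement in Theorem \ref{teogordo} and the smoothness of the splitting one should work with a $C^1$ version of $k$ (a mollification leaving the equilibria \eqref{states} and the region $R$ intact); granting this, the remaining steps are the routine $C_0$-semigroup estimates already in place from Lemmas \ref{lemdos}--\ref{lemtres}.
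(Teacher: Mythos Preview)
Your approach is essentially the paper's: linearize the fast semiflow at each constant equilibrium $(u_M,v_M)\in\cM_0$, block-diagonalize against the Neumann eigenbasis $\{\varphi_j\}$, and check that the eigenvalues of each $2\times2$ block are strictly negative. You are in fact more careful than the paper, which treats only the $j\ge1$ modes (and mistakenly keeps the nonlocal $k(p_M)$ contribution in its $J_M$ there, although the conclusion survives); your separate analysis of the $j=0$ block, identification of $T_{(u_M,v_M)}\cM_0$ with the zero-eigenspace of $B_0$, and attention to uniformity and to the endpoint degeneracies supply details the paper leaves implicit.
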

\begin{proof}
We need to show that the manifold is attractive for the fast dynamics or solutions to the fast system \eqref{fast}. For that purpose consider $(u_M, v_M) \in \cM_0$ and perturbations $(u+ u_M, v + v_M)$ as solutions to the fast system \eqref{fast}. Linearizing around $(u_M, v_M)$ one obtains
\begin{equation}
\label{linfast}
\begin{aligned}
u_t &= d \Delta u,\\
v_t &= \Delta v - \alpha v + k(p_M) \int_{\Omega} p(y,t) \, dy + k'(p_M) |\Omega| p_M p,
\end{aligned} \quad \qquad x \in \Omega, \;\; t > 0,
\end{equation}
subject to Neumann boundary conditions $\partial_\nu u = \partial_\nu v = 0$ on $\partial \Omega$. Let $\varphi_j$ and $\mu_j$ be the eigenfunctions and eigenvalues, respectively, of the Neumann Laplacian on $\Omega$; that is, $- \Delta \varphi_j = \mu_j \varphi_j$, with $0 = \mu_0 < \mu_1 \leq \ldots \leq \mu_j \leq \ldots$, $\mu_j \to +\infty$, $\partial_\nu \varphi_j = 0$ on $\partial \Omega$ as $j \to +\infty$ and $\varphi_0$ is the constant solution. Therefore, $\mu_1 > 0$ is the first positive eigenvalue, the eigenfunctions are an orthonormal basis of $L^2(\Omega)$ and we can consider expansions of the form
\[
u - u_M = \sum_{j=1}^\infty w_j(t) \varphi_j(x), \qquad v - v_M = \sum_{j=1}^\infty y_j(t) \varphi_j(x).
\]
Upon substitution into \eqref{linfast} one finds that for each $j$,
\[
\frac{d}{dt} \begin{pmatrix} w_j \\ y_j  \end{pmatrix} = (- \mu_j D + J_M) \begin{pmatrix} w_j \\ y_j  \end{pmatrix}
\]
with
\[
D = \begin{pmatrix} d & 0 \\ 0 & 1\end{pmatrix}, \qquad J_M = \begin{pmatrix} 0 & 0 \\ |\Omega| (k'(p_M) p_M + k(p_M) )& |\Omega| (k'(p_M) p_M + k(p_M)) - \alpha \end{pmatrix}.
\]
The eigenvalues of $- \mu_j D + J_M$ are 
\[
\lambda^{(j)}_1 = - \mu_j d, \qquad \lambda^{(j)}_2 = |\Omega|k'(p_M) p_M - \mu_j + |\Omega|k(p_M) - \alpha.
\]
Clearly $\lambda^{(j)}_1 < 0$ for all $j \geq 1$. Substituting $k(p_M) |\Omega| p_M = \alpha v_M$ we notice that
\[
\lambda^{(j)}_2 =  |\Omega|k'(p_M) p_M - \mu_j - |\Omega| k(P_M) u_M < 0,
\]
inasmuch as $k$ is decreasing. Thus, solutions to the linearized fast system around a point in $\cM_0$ decay and time and we conclude that the slow manifold is normally hyperbolic.
\end{proof}

By Theorem \ref{teogordo}, for $0 < \delta \ll 1$ sufficiently small there exists an invariant manifold $\cM_\delta$ under the semiflow $S_\delta(t)$ associated to system \eqref{HEHnd}, which is close to $\cM_0$. The long time dynamics of solutions to \eqref{HEHnd} are therefore settled onto the slow manifold $\cM_0$, which can be expressed as a graph (see Lemma 3.2 in \cite{HEH13}). Indeed, by the implicit function theorem, for every $(u_M, v_M) \in \cM_0$ satisfying $M(u_M , v_M) = 0$, and since
\[
\partial_v M(u_M, v_M) = k'(p_M) |\Omega| p_M - k(p_M) |\Omega| u_M < 0,
\]
then the slow manifold can be recast as $\cM_0 = \{ (u, \widetilde{v}_\alpha(u) \, : \, u\in [0,1] \}$, where
\[
\frac{d \widetilde{v}_{\alpha}}{du} = \frac{|\Omega| (k'(p) p + k(p))}{\alpha - |\Omega| (k'(p) p + k(p))}, \qquad p = u + \widetilde{v}_{\alpha}(u).
\]
Moreover, it can be shown that for $\alpha_1 > \alpha_2$, one has $\widetilde{v}_{\alpha_1}(u) < \widetilde{v}_{\alpha_2}(u)$(see \cite{HEH13}, Lemma 3.2).

As a result, the tumor growth paradox arises due to the properties of the slow manifold analyzed by Hillen \textit{et al.} \cite{HEH13}. (Actually, the analysis can be quoted word by word.) We omit the details and simply state the emergence of the paradox in the current PDE setting. Notice that we need to assume that the initial conditions are near the attracting slow manifold $\cM_0$ to guarantee the emergence of the paradoxical tumor growth.
\begin{theorem}
Under the assumptions \eqref{hypk} on the progeny kernel $k = k(p)$ and for $0 < \delta \ll 1$ sufficiently small, let $(u_1, v_1)$ and $(u_2, v_2)$ be the corresponding solutions to the PDE system \eqref{HEHnd} for values of $\alpha_1$ and $\alpha_2$, respectively, under Neumann boundary conditions \eqref{Neumannbc} with the same initial condition $(u,v)(x,0)$ (same initial tumor size) near the slow manifold $\cM_0$. Assume that $\alpha_1 > \alpha_2 > 0$. Then there exists times $t_a, t_b$ such that 
\[
\int_{\Omega} p_1(x,t_a) \, dx = \int_\Omega p_2(x, t_b) \, dx, \quad \text{and } \quad \int_{\Omega} p_1(x,t_a + \theta) \, dx > \int_\Omega p_2(x, t_b + \theta) \, dx,
\]
for each $\theta > 0$.
\end{theorem}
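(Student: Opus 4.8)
The plan is to reduce the long-time behaviour of solutions of \eqref{HEHnd}--\eqref{Neumannbc} to the scalar ``slow'' equation living on the attracting invariant manifold $\cM_\delta$ furnished by Theorem \ref{teogordo}, to note that this reduced equation is a small perturbation of the ``outer'' ODE of \cite{HEH13}, and then to transfer the conclusion of their tumor-growth-paradox theorem to the quantity $P_i(t):=\int_\Omega p_i(x,t)\,dx$.

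First I would check the hypotheses of Theorem \ref{teogordo}. Lemmas \ref{lemdos}--\ref{lemcuatro} already supply the $C_0$-semiflow $S_0(t)$ of the fast system \eqref{fast}, the invariance of $\cM_0$ under $S_0(t)$, and its normal hyperbolicity with uniformly negative transverse eigenvalues. The only missing input is $\|S_\delta(t_1)-S_0(t_1)\|_1\le\theta(\delta)$ together with $\|S_\delta(t)-S_0(t)\|_0\le\theta(\delta)$ for $t\in[0,t_1]$ on a fixed bounded neighbourhood of $\cM_0$. I would obtain this from the variation-of-constants formula: both semiflows are built from the same analytic semigroup $e^{-A_Nt}$ and differ only through the extra term $\delta\,k(p)\big(\int_\Omega u\,dy\big)(1,-1)$, which on a bounded set is smooth, globally Lipschitz and, together with its Fr\'echet derivative, of size $O(\delta)$ (cf.\ Lemma \ref{lemdos}); a Gronwall estimate then gives $\|S_\delta(t)-S_0(t)\|_1=O(\delta)$ uniformly on $[0,t_1]$. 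Theorem \ref{teogordo} then yields, for each $\alpha\in\{\alpha_1,\alpha_2\}$ and all small $\delta$, a $C^1$, compact, connected, normally hyperbolic invariant manifold $\cM_\delta^{(\alpha)}$ converging to $\cM_0^{(\alpha)}$ in the $C^1$ topology as $\delta\to0$.

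Next I would reduce and conclude. Normal hyperbolicity makes $\cM_\delta^{(\alpha)}$ locally exponentially attracting; since the common initial datum lies near $\cM_0$, for $\delta$ small both solutions $(u_i,v_i)$ enter a tubular neighbourhood of $\cM_\delta^{(\alpha_i)}$ and are thereafter shadowed, up to an exponentially small transient, by the reduced flow on $\cM_\delta^{(\alpha_i)}$. On $\cM_0^{(\alpha)}$ the states are constant (Lemma \ref{lemuno}) and the manifold is the graph $v=\widetilde{v}_\alpha(u)$, $u\in[0,1]$, with $d\widetilde{v}_\alpha/du$ as displayed above; hence, in the single coordinate $\bar p=u+\widetilde{v}_\alpha(u)$, the reduced flow on $\cM_0^{(\alpha)}$ is essentially the slow equation of \cite{HEH13} on $\widetilde{\cM}_0$ (up to harmless $|\Omega|$-rescalings produced by the non-local integrals on constant states), and the reduced flow on $\cM_\delta^{(\alpha)}$ is a perturbation of it that is $C^1$-small, of order $o(1)$ as $\delta\to0$; in particular, for points of $\cM_\delta^{(\alpha)}$ both $\int_\Omega p\,dx$ and $\tfrac{d}{dt}\int_\Omega p\,dx$ agree with $|\Omega|\bar p$ and $|\Omega|\tfrac{d}{dt}\bar p$ up to errors that are $o(1)$ uniformly on the compact slow manifold. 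I would then quote the slow-manifold analysis of \cite{HEH13} (their Lemma 3.2, Theorem 3.3 and Corollary 3.4) essentially verbatim: on the slow manifold $\tfrac{d}{dt}\bar p=\Phi(\bar p,\alpha)>0$ for $\bar p\in(0,1)$, one has $\widetilde{v}_{\alpha_1}<\widetilde{v}_{\alpha_2}$, and a direct computation gives $\partial_\alpha\Phi>0$ on $(0,1)$ (ultimately because $k$ is decreasing, \eqref{hypk}); thus at a common size $\bar p_1(t_0)=\bar p_2(t_0)=p^\ast\in(0,1)$ one gets the strict inequality $\Phi(p^\ast,\alpha_1)>\Phi(p^\ast,\alpha_2)$, and reparametrising each (monotone increasing) trajectory by its value $\bar p$ and comparing passage times produces times $t_a,t_b$ with $\bar p_1(t_a)=\bar p_2(t_b)$ and $\bar p_1(t_a+\theta)>\bar p_2(t_b+\theta)$ for every $\theta>0$ (both trajectories remaining below $1$, since $k(1)=0$ makes $\bar p=1$ reachable only in infinite time). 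Since these inequalities are strict and the crossing transverse, they survive the $o(1)$ errors, so for $\delta$ small the same holds with $\bar p_i$ replaced by $|\Omega|^{-1}P_i$, which is the assertion.

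The only step demanding genuine work beyond transcribing \cite{HEH13} is the $C^1$-norm comparison of the two semiflows on a fixed neighbourhood of $\cM_0$ required by Theorem \ref{teogordo} --- i.e.\ making $\|S_\delta(t)-S_0(t)\|_1=O(\delta)$ on $[0,t_1]$ precise --- together with the bookkeeping that the induced perturbation of the reduced vector field (hence of $\tfrac{d}{dt}\int_\Omega p\,dx$ and of the crossing times $t_a,t_b$) is $o(1)$ as $\delta\to0$, so that the strict, transverse inequalities of \cite{HEH13} are not destroyed; everything else is already contained in Lemmas \ref{lemuno}--\ref{lemcuatro} or is a word-for-word copy of the ODE argument in \cite{HEH13}.
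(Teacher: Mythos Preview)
Your proposal is correct and follows essentially the same route as the paper: invoke Theorem \ref{teogordo} via Lemmas \ref{lemuno}--\ref{lemcuatro} to obtain the perturbed invariant manifold $\cM_\delta$, then note that the reduced slow dynamics coincide (up to $|\Omega|$-rescaling) with those of \cite{HEH13} and quote their Lemma 3.2, Theorem 3.3 and Corollary 3.4 verbatim. In fact you are more careful than the paper, which explicitly ``glosses over'' the $C^1$-closeness estimate $\|S_\delta-S_0\|_1=O(\delta)$ and the perturbation bookkeeping that you sketch via variation-of-constants and Gronwall.
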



\section*{Acknowledgements}


The authors are grateful to Thomas Hillen, for calling their attention to the work by Delgado \textit{et al.} \cite{DeDS18} and for enlightening conversations, as well as to Alessio Franci, for pointing out reference \cite{Kueh15}.  This work is an extension of the M. Sc. Thesis of the first author \cite{PadTh19}, written under the supervision of the second, and was partially supported by DGAPA-UNAM, program PAPIIT, grant IN-100318.

\def\cprime{$'$}

%



\end{document}